\numberwithin{equation}{section}
\theoremstyle{plain}
\newtheorem{prop}{Proposition}
\newtheorem{theo}[prop]{Theorem}
\newtheorem{coro}[prop]{Corollary}
\newtheorem{lemm}[prop]{Lemma}
\theoremstyle{definition}
\newtheorem{rema}[prop]{Remark}
\newtheorem{exam}[prop]{Example}
\newcommand{\bC}{\mathbb C}
\newcommand{\bG}{\mathbb G}
\newcommand{\bP}{\mathbb P}
\newcommand{\bQ}{\mathbb Q}
\newcommand{\bZ}{\mathbb Z}
\newcommand{\cE}{\mathcal E}
\newcommand{\cP}{\mathcal P}
\newcommand{\cX}{\mathcal X}
\newcommand{\cY}{\mathcal Y}
\newcommand{\rH}{\mathrm H}
\newcommand{\Bl}{\operatorname{Bl}}
\newcommand{\Br}{\operatorname{Br}}
\newcommand{\Pic}{\operatorname{Pic}}
\newcommand{\Spec}{\operatorname{Spec}}
\newcommand{\Aut}{\operatorname{Aut}}
\newcommand{\Hom}{\operatorname{Hom}}
\newcommand{\FM}{\mathsf{FM}}
\newcommand{\ra}{\rightarrow}
\author{Brendan Hassett}
\address{Department of Mathematics\\
Brown University
Box 1917 \\
151 Thayer Street,
Providence, RI 02912 \\
USA}
\email{brendan\underline{ }hassett@brown.edu}
\author{Yuri Tschinkel}
\address{Courant Institute\\
                New York University \\
                New York, NY 10012 \\
                USA }
\email{tschinkel@cims.nyu.edu}
\address{Simons Foundation\\
160 Fifth Avenue\\
New York, NY 10010\\
USA}
\title[Equivariant derived equivalence and rational points]{Equivariant derived equivalence and rational points on K3 surfaces}
\begin{document}
\date{February 9, 2023}

\begin{abstract}
We study arithmetic properties of derived equivalent 
K3 surfaces over the field of Laurent power series, 
using the equivariant geometry of K3 surfaces with 
cyclic groups actions.  
\end{abstract}

\maketitle

\section{Introduction}
\label{sect:intro}

Let $X$ and $Y$ be smooth K3 surfaces over a nonclosed field $K$. 
Suppose $X$ and $Y$ are derived equivalent over $K$, i.e., 
there is an equivalence
of bounded derived categories 
of coherent sheaves
$$
\Phi: D^b(X)\to D^b(Y),
$$
as triangulated categories, defined over $K$. 
Such a derived equivalence 
respects (see \cite[Section 1]{HT-derived}):
\begin{itemize}
\item
the Galois action on geometric Picard groups, 
\item the Brauer groups, 
\item the {\em index}, i.e., the $\gcd$ of degrees of field extensions $K'/K$ such that
$X(K')\neq \emptyset$. 
\end{itemize}

We are interested in understanding which other {\em arithmetic} properties are preserved under $\Phi$.  
Specifically, in \cite{HT-derived} we asked whether or not
$$X(K) \neq \emptyset \quad  \Leftrightarrow
\quad Y(K) \neq \emptyset.
$$
This is known when
\begin{itemize}
\item 
$K=\mathbb F_q$ is finite, $\mathrm{char}(K)>2$,  \cite{LO}, \cite[16.4.3]{Hu},
\item 
$K$ is real \cite[Prop. 25]{HT-derived},
\item $K=\mathbb C((t))$ \cite[Cor. 30]{HT-derived}, assuming that local monodromy has trace $\neq -2$, in which case both $X(K), Y(K)\neq \emptyset$,
\item $K$ is $p$-adic, under strong assumptions on the reduction and for $p\ge 7$ \cite[Prop. 36]{HT-derived}.
\end{itemize}

We propose to study this in a very special case -- isotrivial 
families of K3 surfaces over the punctured disc.  Let $G=C_N$
be a finite cyclic group of order $N$. Fix projective
K3 surfaces $X$ and $Y$ over $\bC$ with $G$-actions and consider
the associated isotrivial families
$$
\cX,\cY \rightarrow \Delta_1:=\Spec(\bC((t))),
$$
with generic fibers $\cX_t$ and $\cY_t$ over $K=\bC((t))$, as defined in Section~\ref{subsect:RFP}.

\begin{theo}\label{theo:main}
Suppose that $\cX_t$ and $\cY_t$ admit a derived equivalence
$$
\Phi: D^b(\cX_t) \simeq D^b(\cY_t),
$$
over $K$. 
If $\cX_t(K)\neq \emptyset$ then $\cY_t(K) \neq \emptyset$.
\end{theo}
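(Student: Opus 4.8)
The plan is to reduce the arithmetic statement over $K$ to an equivariant fixed-point statement over $\bC$, and then to analyze the latter through the holomorphic and topological Lefschetz formulas together with Nikulin-type lattice theory. First I would make the twist explicit. Writing $K'=\bC((s))$ with $s^N=t$, the extension $K'/K$ is cyclic Galois with group $G$, and by construction of the isotrivial family one has $\cX_t\times_K K'\cong X_{K'}$, where $\operatorname{Gal}(K'/K)=G$ acts on the right diagonally, through the tautological Galois action on $K'$ and the given $G$-action on $X$. Since $X$ is proper, the valuative criterion identifies $X(K')=X(\bC[[s]])$, so a $K$-point of $\cX_t$ is the same as a power-series arc $\phi\in X(\bC[[s]])$ satisfying the descent relation $\phi(\zeta_N s)=g\cdot\phi(s)$, where $g$ generates $G$. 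Reducing modulo $s$ shows $\phi(0)\in X(\bC)$ is a fixed point of $g$; conversely, if $x_0\in X^g$ then the constant arc $\phi\equiv x_0$ descends to a $K$-point. Hence $\cX_t(K)\neq\emptyset$ if and only if $X^g=X^G\neq\emptyset$, and likewise for $Y$, so the theorem is equivalent to the implication $X^g\neq\emptyset\Rightarrow Y^g\neq\emptyset$.

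Next I would transport the derived equivalence to cohomology. Over $K'$ the equivalence $\Phi$ becomes a $K'$-linear equivalence $D^b(X_{K'})\simeq D^b(Y_{K'})$, and, extending the invariance statements recalled in the introduction, a derived equivalence over the base field respects the Galois action on the full Mukai lattice. Because the family is isotrivial, the arithmetic monodromy of $\operatorname{Gal}(\overline K/K)$ on $\tilde H$ factors through $G$ with the generator acting by the geometric automorphism $g^{*}$. Consequently $\Phi$ induces a $G$-equivariant Hodge isometry $\tilde H(X,\bZ)\cong\tilde H(Y,\bZ)$ in which $g$ acts trivially on the summand $H^0\oplus H^4=U$ and by $g^{*}$ on $H^2$. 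Three consequences matter: $g$ acts by a common scalar $\lambda$ on $H^{2,0}(X)$ and $H^{2,0}(Y)$; the traces $\operatorname{tr}(g\mid\tilde H(X))=\operatorname{tr}(g\mid\tilde H(Y))$ coincide; and the invariant lattices are isometric, giving $U\oplus H^2(X)^{g}\cong U\oplus H^2(Y)^{g}$.

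Now I would split on $\lambda$. The holomorphic Lefschetz fixed-point formula gives that the sum of local contributions over $Y^g$ equals $\chi(g,\cO_Y)=1+\lambda^{-1}$, which is nonzero whenever $\lambda\neq-1$; hence for $\lambda\neq-1$ the locus $Y^g$ is automatically nonempty (as is $X^g$), and the conclusion holds without even invoking the hypothesis. This recovers and slightly sharpens the case of \cite[Cor. 30]{HT-derived}: the topological Lefschetz formula reads $\chi_{top}(X^g)=\operatorname{tr}(g\mid\tilde H(X))=2+\operatorname{tr}(g^{*}\mid H^2(X))$, so trace $-2$ on $H^2$ is precisely the vanishing $\chi_{top}(X^g)=0$ left open there, and our equality of traces already forces $\chi_{top}(X^g)=\chi_{top}(Y^g)$.

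The remaining and genuinely difficult case is $\lambda=-1$, where both Lefschetz numbers vanish and $X^g$ may be a nonempty configuration of curves (or, for higher even order, isolated points) of total Euler characteristic zero; here the hypothesis $X^g\neq\emptyset$ is essential and cannot be bypassed by a numerical count. My plan is to read nonemptiness off the invariant lattice. For a non-symplectic automorphism with $\lambda=-1$ the isomorphism type of the fixed locus — in particular whether it is empty, i.e.\ the fixed-point-free Enriques-type situation — is governed by the isometry class of $H^2(X)^{g}$ and its discriminant form (Nikulin's invariants $(r,a,\delta)$ for involutions, and their analogues when $g^2$ is symplectic). Since $U$ is unimodular, the isometry $U\oplus H^2(X)^{g}\cong U\oplus H^2(Y)^{g}$ equates signatures and discriminant forms; as these invariant lattices are indefinite — hyperbolic, containing an invariant ample class — the Eichler--Nikulin uniqueness theorem should let me cancel the hyperbolic plane and conclude $H^2(X)^{g}\cong H^2(Y)^{g}$, whence $X^g$ and $Y^g$ are of the same type and $X^g\neq\emptyset$ yields $Y^g\neq\emptyset$. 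The main obstacle I anticipate is exactly this last step: establishing that nonemptiness of the fixed locus is determined by the invariant-lattice data uniformly across all even orders with $\lambda=-1$, and verifying that the relevant lattices satisfy the hypotheses of the cancellation theorem. This is where the equivariant classification of K3 automorphisms with cyclic action carries the argument.
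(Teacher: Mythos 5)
Your reduction of the theorem to the fixed-point statement $X^\sigma\neq\emptyset\Rightarrow Y^\sigma\neq\emptyset$, and your disposal of the case where $\sigma$ acts on the symplectic form by $\lambda\neq -1$ via the holomorphic Lefschetz formula, agree with the paper (Proposition~\ref{prop:section} and Lemma~\ref{lem:nonz}). The gap is in the case $\lambda=-1$, i.e.\ $N=n\cdot 2$. Your assertion that there ``both Lefschetz numbers vanish'' and that the fixed locus has total Euler characteristic zero is false: only the holomorphic Lefschetz number vanishes, while $\chi(X^\sigma)=\mathrm{Tr}\bigl(\sigma^*\,|\,\widetilde{\rH}\bigr)$ is typically nonzero (it is $-18$ for the covering involution of a degree-two K3, and it equals the number of fixed points, possibly $2,4,6,8$, when $N=2\cdot 2$). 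This error hides the decisive observation for the mixed case $n\geq 2$: because $X^\sigma\subseteq X^{\sigma^2}$ and the symplectic automorphism $\sigma^2$ has no fixed curves by Nikulin, $X^\sigma$ is a finite set of points, so $\#X^\sigma=\chi(X^\sigma)=\mathrm{Tr}(\sigma^*)$ is itself a derived invariant and nonemptiness transfers with no lattice theory at all (the paper's Lemma~\ref{lemm:symf}).

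The substitute you propose for $\lambda=-1$ --- reading emptiness off the invariant lattice --- is left incomplete and rests on two claims that are not established. First, cancelling $U$ from $U\oplus\rH^2(X,\bZ)^\sigma\simeq U\oplus\rH^2(Y,\bZ)^\sigma$ only yields that the invariant lattices lie in the same genus; the paper's Example~\ref{exam:Mazur} is built precisely on the failure of such cancellation for K3 lattices, so uniqueness within the genus must be verified case by case (it does hold for the $2$-elementary invariant lattices of involutions). Second, for $N>2$ the invariant lattice of $\sigma$ does not determine $\mathrm{Tr}(\sigma^*)$, let alone the emptiness of $X^\sigma$, and you yourself flag this uniformity claim as the main unresolved obstacle rather than proving it. In fact the lattice argument is needed only for $N=m=2$, where $\chi(X^\sigma)=0$ is compatible both with the Enriques involution and with a fixed locus consisting of elliptic curves, and where the characterization $\rH^2(X,\bZ)^\sigma\simeq U(2)\oplus E_8(2)$ (unique in its genus) of the fixed-point-free involution finishes the proof. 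Restricting your lattice step to that single case and using the point count everywhere else would close the gap.
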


Related questions were considered by \cite{AAFH} (hyperk\"ahler fourfolds) and 
twisted K3 surfaces \cite{ADPZ}; here the existence of rational points is {\em not} compatible with derived equivalence. The case of torsors for abelian varieties is addressed in \cite{AKW}.

\

Our approach is based on the analogy between equivariant geometry and 
descent for nonclosed fields. Section~\ref{sect:gen} presents foundations
for derived equivalence in the presence of group actions, with a view toward
equivariant approaches to the Mukai lattice. We link isotrivial families over
fields of Laurent series to equivariant geometry in Section~\ref{sect:isotrivial}.
Section~\ref{sect:fixed} presents the proof of Theorem~\ref{theo:main} through analysis of fixed
points; we close with a discussion of connections
with the Burnside formalism and open questions.

\

\noindent
{\bf Acknowledgments:} 
The first author was partially supported by Simons Foundation Award 546235 and NSF grant 1701659,
the second author by NSF grant 
2000099. We are grateful to Andrew Kresch for his help towards a correct formulation of equivariant criteria, to Nicolas Addington for his comments on descending
equivariant equivalences, and to Barry Mazur
for his suggestion to find examples along the lines of Example~\ref{exam:Mazur}.

\section{Generalities}
\label{sect:gen}

\subsection{Equivariant derived equivalence} 
\label{subsect:EDE}
We follow \cite{ploog} and refer the reader to \cite{KS} for a more general
approach.  

Let $k$ be an algebraically closed field of characteristic zero and
$X$ a smooth projective variety over $k$ equipped with the action of a finite group $G$.
We consider the bounded derived category $D^b(X,G)$ of $G$-equivariant complexes of coherent sheaves on $X$, i.e., objects are pairs $\cP=(P,\rho)$ consisting of complexes $P$ of coherent sheaves and $G$-linearizations
$\rho$ compatible with differentials \cite{ploog}. This is compatible with intrinsic formulations of
$G$-actions on triangulated categories \cite[\S 9]{elagin}, under our assumptions.  

Suppose that $X$ and $Y$ are smooth projective varieties with $G$-actions. Given an 
element 
$$
\cP=(P,\rho) \in D^b(X\times Y,G\times G)
$$
there is an equivariant Fourier-Mukai transform
$$
\FM_{\cP}(-,G): D^b(X,G) \rightarrow D^b(Y,G),
$$
obtained by pulling back via projection to $X$, tensoring by $\cP$, and pushing forward via
projection to $Y$ \cite[\S~1.2]{ploog}. This operation makes sense \cite[Lemma~5]{ploog}
provided $\cP$ is equivariant for the diagonal $G_{\Delta} \subset G\times G$ only, and the 
equivariant Fourier-Mukai transform is compatible with the ordinary Fourier-Mukai 
transform associated with $P$. (In other words, we can forget the $G$-actions.) 
Furthermore, if $P$ induces an equivalence of ordinary derived categories then 
$\cP$ induces an equivalence of the equivariant derived categories.  

We assume that $G$ acts faithfully on $X$ and $Y$.  
Conversely, suppose that $P \in D^b(X\times Y)$ induces an equivalence. When can it be lifted 
to an equivariant derived equivalence? By \cite[Lem.~4]{ploog}, each kernel $P$ inducing an equivalence must be simple, i.e., every automorphism of $P$ as an element of the derived category may be represented
as rescaling of a representative complex. In particular,
if $P$ is $G$-invariant as an element of the derived
category -- $(g,g)^*P$ is quasi-isomorphic to $P$ 
for each $g\in G$ -- then the underlying complex of sheaves is
$G$-invariant.  Using the identification
$\Aut(P)=\bG_m$, for our desired lifting it is necessary that the resulting cocycle $\alpha \in \rH^2(G,\bG_m)$  \cite[Lem.~1]{ploog} vanish. 
When $G$ is cyclic, $\rH^2(G,\bG_m)=0$ and $\alpha$ vanishes automatically.

If $P$ does lift to an equivariant complex $\cP=(P,\rho)$ then this typically is not unique.
We can tensor $\rho$ freely with any character of $G$.   

\subsection{Specialization to K3 surfaces}
We retain the notation of Section~\ref{subsect:EDE} and assume that $X$ and $Y$ are K3 surfaces with Mukai lattices
$\widetilde{\rH}(X,\bZ)$ and $\widetilde{\rH}(Y,\bZ)$.
Suppose that $X$ and $Y$ 
are derived equivalent, with the equivalence realized
by an isomorphism
$$
i:Y\stackrel{\sim}{\longrightarrow} M_v(X),
$$ 
where 
$$v=(r,D,S) \in \rH^0(X,\bZ) \oplus \rH^2(X,\bZ) \oplus
\rH^4(X,\bZ)$$
is the Mukai vector of a moduli space of vector
bundles. See \cite[Prop.~10.10]{Hu-FM} for more details;
in particular, since $v$ induces
a derived equivalence, $r$ and $s$ are relatively prime
and we may assume $r>0$.  
The kernel $P \in D^b(X\times Y)$ inducing the equivalence may be interpreted as a universal sheaf over $X \times M_v(X)$.
We have suppressed the polarization from the notation because it is irrelevant for our analysis;
under our assumptions, any ample line bundle will yield a fine moduli space parametrizing stable sheaves \cite[Prop.~10.20]{Hu-FM}.

Suppose now that $X$ and $Y$ come with faithful actions by a finite group $G$, where $v$ is $G$-invariant so that $M_v(X)$
admits a $G$-action. Here, we are implicitly using a
$G$-invariant polarization so stability is compatible
with the $G$ action.

Fix an equivariant isomorphism $i:Y\stackrel{\sim}{\ra}M_v(X)$ as above.
This is not sufficient to produce an equivariant derived equivalence between
$X$ and $Y$.  The issue is the existence of an {\em equivariant} universal sheaf 
$E \rightarrow X \times M_v(X)$. Given an arbitrary universal sheaf $E$,
simplicity of the sheaves parametrized by $M_v(X)$ yields
$$
g^*E \simeq E \otimes p_2^*L_g,\quad  g\in G,
$$
where $L_g$ is a line bundle on $M_v(X)$. The data $(L_g)_{g\in G}$ defines an element in $\rH^1(G,\Pic(M_v(X)))$. Assuming this vanishes, we can produce
an invariant kernel $P$ on $X\times Y$. As we have seen, the obstruction to
lifting $P$ to an equivariant complex $\cP$ then lies in $\rH^2(G,\bG_m)$.   

Both these obstructions are encoded by
$$
\ker\big(\Br(M_v(X),G) \rightarrow \Br(M_v(X)) \big)
$$
in the equivariant Brauer group, computed by a spectral sequence with $\mathrm E_2$-terms \cite[\S~2.3]{HT21} 
$$
\rH^2(G,\bG_m)\, \text{ and }\, \rH^1(G,\Pic(M_v(X))).
$$
Ploog's cocycle $\alpha$ lies in the kernel of the natural arrow
$$\rH^2(G,\bG_m) \rightarrow \Br(M_v(X),G)$$
induced by the structure map of $M_v(X)$. 
This vanishes when $M_v(X)$ admits a fixed point.  

Mukai \cite{Mukai} and Orlov \cite[Th.~3.3]{Orlov} have shown that
K3 surfaces 
$X$ and $Y$ are
derived equivalent if and only if there is an isomorphism of transcendental lattices
$$
T(X) \simeq T(Y),
$$
as Hodge structures.  This does not suffice in the equivariant case:

\begin{prop} 
\label{prop:Mukaicritnew}
Let $X$ and $Y$ be complex projective K3 surfaces with  
faithful actions by a finite group $G$.
Then we have a sequence of implications:
\begin{enumerate}
\item{there is a $G$-equivariant derived equivalence $D^b(X) \simeq D^b(Y)$;}
\item{there is an isomorphism of Mukai lattices
$$
\widetilde{\rH}^*(X,\bZ) \simeq \widetilde{\rH}^*(Y,\bZ)
$$
respecting the Hodge structures and the $G$-actions;} 
\item{there is a $G$-equivariant isomorphism
$$T(X) \simeq T(Y)$$
of transcendental lattices,
compatible with Hodge structures.}
\end{enumerate}
\end{prop}
\begin{proof}
Suppose that $X$ and $Y$ are equivariantly derived equivalent. Then
there is an isomorphism $i:Y\simeq M_v(X)$ such that the universal sheaf
$$
E \rightarrow X \times M_v(X)
$$
admits a $G$-linearization $\rho$ such that
$\FM_{(E,\varrho)}$ is an equivalence.  
The cohomological Fourier-Mukai transform and $i$ induce an isomorphism
$$
i^* \circ \FM_E:\widetilde{\rH}^*(X,\bZ) \rightarrow \widetilde{\rH}^*(Y,\bZ)$$
taking $v$ to $(0,0,1)$. 
The homomorphism $i^*\circ \FM_E$ induces the desired isomorphism of transcendental cohomology groups.
\end{proof}

Reversing the first implication in Proposition~\ref{prop:Mukaicritnew} is not possible precisely when the obstruction $\alpha \in \rH^2(G,\bG_m)$ is nonzero. Since the obstruction $\alpha$ vanishes in the cyclic case we have:

\begin{coro} \label{coro:Mukaicrit}
Suppose that $X$ and $Y$ are complex projective K3 surfaces with faithful actions
by a cyclic group $G$. Then there is a $G$-equivariant derived equivalence between them iff there is an isomorphism of their Mukai lattices
respecting the Hodge structures and the $G$-actions.
\end{coro}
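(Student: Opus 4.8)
The plan is to derive Corollary~\ref{coro:Mukaicrit} directly from Proposition~\ref{prop:Mukaicrit} by showing that, in the cyclic case, the chain of implications (1) $\Rightarrow$ (2) can be reversed. The forward direction (1) $\Rightarrow$ (2) is already supplied by the proposition, so the entire content of the corollary is the converse: given a $G$-equivariant Hodge isometry of Mukai lattices $\widetilde{\rH}^*(X,\bZ) \simeq \widetilde{\rH}^*(Y,\bZ)$, I must produce a $G$-equivariant derived equivalence $D^b(X) \simeq D^b(Y)$.

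First I would use the isomorphism of Mukai lattices to locate $Y$ as a moduli space of sheaves on $X$. Transporting the vector $(0,0,1) \in \widetilde{\rH}^*(Y,\bZ)$ back along the given isometry yields a $G$-invariant Mukai vector $v$ on $X$, and by Mukai--Orlov theory the corresponding moduli space $M_v(X)$ is a K3 surface equivariantly isomorphic to $Y$; the $G$-invariance of $v$ is what equips $M_v(X)$ with its $G$-action and makes the identification $i:Y \simeq M_v(X)$ equivariant. At this stage I have, ignoring equivariance, an honest universal sheaf $E \to X \times M_v(X)$ whose ordinary Fourier--Mukai transform $\FM_E$ realizes the Hodge isometry. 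The task is then precisely to promote the kernel to an equivariant one.

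Next I would invoke the obstruction analysis already laid out in Section~\ref{subsect:EDE}. As recalled there, the two obstructions to upgrading $E$ to an equivariant kernel $\cP = (E,\rho)$ inducing an equivariant equivalence are the class $(L_g)_{g\in G} \in \rH^1(G,\Pic(M_v(X)))$ measuring failure of invariance of $E$, and, once invariance is arranged, the Ploog cocycle $\alpha \in \rH^2(G,\bG_m)$ arising from $\Aut(P) = \bG_m$. The decisive point is that $G$ is cyclic: the excerpt states that when $G$ is cyclic, $\rH^2(G,\bG_m) = 0$, so $\alpha$ vanishes automatically and the second obstruction disappears. Thus the only remaining issue is the $\rH^1$ class, and I would argue that a $G$-equivariant Hodge isometry of the full Mukai lattices (rather than merely of transcendental lattices) is exactly the data that kills it: the isometry supplies the compatible equivariant identification of Picard lattices whose absence, in Proposition~\ref{prop:Mukaicrit}, was the source of strictness in (2) $\Rightarrow$ (1). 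Concretely, the existence of the equivariant Mukai isometry taking $v$ to $(0,0,1)$ forces the linearization data to be rectifiable, allowing a $G$-linearization $\rho$ on $E$ to be chosen so that $\FM_{(E,\rho)}$ is $G$-equivariant.

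I expect the main obstacle to be the precise bookkeeping connecting the equivariant Mukai isometry to the vanishing of the $\rH^1(G,\Pic(M_v(X)))$ obstruction. The subtlety is that this class is defined via the line bundles $L_g$ with $g^*E \simeq E \otimes p_2^*L_g$, and one must verify that the equivariant matching of Picard lattices provided by (2) genuinely trivializes this cocycle rather than merely an associated transcendental piece; this is exactly the compatibility on discriminant groups flagged in the proof of the proposition, whose failure was exhibited in Example~\ref{exam:compatible}. Once the equivariance of the full lattice isometry is correctly translated into an equivariant splitting of the universal sheaf ambiguity, the vanishing $\rH^2(G,\bG_m)=0$ handles the last obstruction for free and the equivalence follows. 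I would keep the argument brief, since the conceptual framework is already assembled in Section~\ref{subsect:EDE}, and present the corollary as the cyclic specialization in which both obstructions of Proposition~\ref{prop:Mukaicrit} are controlled.
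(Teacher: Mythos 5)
Your proposal follows essentially the same route as the paper, which derives the corollary from Proposition~\ref{prop:Mukaicrit} in a single line: the obstruction $\alpha \in \rH^2(G,\bG_m)$ to reversing the implication (2) $\Rightarrow$ (1) vanishes when $G$ is cyclic. Your extra attention to the class in $\rH^1(G,\Pic(M_v(X)))$ is in fact more careful than the paper, whose proof of the proposition treats $\alpha$ as the \emph{only} obstruction to reversal, so the bookkeeping you flag as the main obstacle is a real subtlety that the paper's own one-sentence argument does not spell out either.
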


\begin{rema}
\label{rema:crit}
The second implication in Proposition~\ref{prop:Mukaicritnew}
also fails to be an equivalence in general. To extend an
isomorphism $T(X) \simeq T(Y)$ to an isomorphism of Mukai lattices, we require a $G$-equivariant 
isomorphism of lattices
$$\Pic(X) \stackrel{\sim}{\longrightarrow} \Pic(Y)$$
compatible (on discriminant groups) with the given isomorphism of transcendental lattices. 
By definition, $T(X)$ is the orthogonal complement to $\Pic(X)$ in $\rH^2(X,\bZ)$.  
Example~\ref{exam:compatible} shows such a homomorphism might not exist.  
\end{rema}

\begin{exam} 
\label{exam:compatible}
Given a polarized K3 surface of degree two $(X,f), f^2=2$, the linear
series $|f|$ induces a double cover $X \rightarrow \bP^2$ \cite[Th.~3.1 and Prop.~8.1]{SD}, branched over a smooth
plane curve of degree six.  
The covering involution $\iota$ acts on $f^{\perp} \subset \rH^2(X,\bZ)$
by multiplication by $-1$.

Let $X$ be a K3 surface surface with 
$$\Pic(X) = \begin{array}{r|cc}  & f_1 & f_2 \\
                            \hline
                            f_1 & 2 & 5 \\
                            f_2 & 5 & 2 
                            \end{array}, $$
with involutions $\iota_1$ and $\iota_2$ associated with the double covers $X\rightarrow \bP^2$ induced by $f_1$ and $f_2$.  
Each involution acts on the primitive cohomology -- hence the 
transcendental cohomology $T(X)$ -- by $-1$. 
However, we shall show there is no automorphism of the Mukai lattice  
$$
a:\widetilde{\rH}(X,\bZ) \rightarrow \widetilde{\rH}(X,\bZ)
$$
compatible with Hodge structures and conjugating these involutions. In particular (3) does not imply (2) in
Proposition~\ref{prop:Mukaicritnew}.

We argue by contradiction; assume such an $a$ existed. We have 
$$
\iota_1(2f_2-5f_1)=-(2f_2-5f_1) \quad 
\iota_2(2f_1-5f_2)=-(2f_1-5f_2),
$$
the unique (up to sign) elements of the Mukai lattice that are
algebraic with eigenvalue $-1$.
Thus we must have
$$a(2f_2-5f_1) = \pm (2f_1-5f_2).$$

The discriminant group $d(\Pic(X))=\Hom(\Pic(X),\bZ)/\Pic(X)$ is 
$$
\bZ/21\bZ\simeq \bZ/3\bZ \times \bZ/7\bZ,
$$ 
with generators $d_1=\frac{f_1-f_2}{3}$ and $d_2= \frac{f_1+f_2}{7}$. 
Our distinguished elements give generators 
$$\frac{2f_2-5f_1}{21}=-d_1+3d_2 \quad
\frac{2f_1-5f_2}{21}=d_1-4d_2.$$
Note that these are not equal, even up to sign. 
We conclude that any automorphism of the algebraic classes
$$\Pic(X) \oplus \rH^0(X,\bZ) \oplus \rH^4(X,\bZ) \subset 
\widetilde{\rH}(X,\bZ)$$
conjugating $\iota_1$ and $\iota_2$ acts on the discriminant
group by an element $\neq \pm 1$. In particular, this applies to
$$a|_{\Pic(X) \oplus \rH^0(X,\bZ) \oplus \rH^4(X,\bZ)}$$

The only automorphisms of the transcendental cohomology $T(X)$ -- 
assuming $X$ is general with the stipulated Picard group
-- are multiplications by $\pm 1$.  These are the only
elements commuting with the action of the Hodge
group of a general such $X$, which is the identity
component of the orthogonal group associated with the intersection form.  Thus 
$$a|_{T(X)} = \pm 1$$
and the same holds true on the discriminant group.
This gives a contradiction: Nikulin's theory gives an isomorphism
$$d(T(X)) \simeq d(\Pic(X))$$
and any automorphism of the full cohomology (compatible with the
Hodge decomposition)
must respect this isomorphism.
\end{exam}

Remark~\ref{rema:crit} is reminiscent of \cite[Exam.~4.11]{HS}:
Isomorphisms of transcendental cohomology groups of {\em twisted} K3 surfaces need not lift to 
twisted derived equivalences. 

\

We close with examples of intriguing derived equivalences relating K3 surfaces with involution:
\begin{exam} \label{exam:Mazur}
Recall that the derived category of any smooth projective variety $X$ has an involution
$$\begin{array}{rcl}
i_X: D^b(X) & \rightarrow& D^b(X) \\
      \cE & \mapsto& (\cE[1])^{\vee}
      \end{array},$$
i.e., the composition of ``shift-by-one'' and ``taking duals''.
When $X$ is a K3 surface, $i_X$ acts on $\widetilde{\rH}(X,\bZ)$
by the identity on $\rH^2$ and multiplication
by $-1$ on $\rH^0$ and $\rH^4$. Note that $i_X$ is {\em not} an 
autoequivalence -- indeed it fails to be orientation-preserving,
a necessary condition for autoequivalences \cite[\S 4]{HMS}. 

We seek degree two K3 surfaces $(X,f)$ and $(Y,g)$
(cf.~Example~\ref{exam:compatible}) with associated involutions
$$\iota:X \rightarrow X, \quad \kappa:Y \rightarrow Y,$$
such that $(D^b(X),i_X\circ \iota)$ and $(D^b(Y),i_Y \circ \kappa)$ are $C_2$-equivariantly 
derived equivalent but $(X,f)$ and $(Y,h)$ are not isomorphic. 
Analogous to  
Corollary~\ref{coro:Mukaicrit}, we would like  
equivariant isomorphisms of Mukai lattices (with Hodge structures)
$$a:\widetilde{\rH}(X,\bZ) \simeq \widetilde{\rH}(Y,\bZ)$$
where there is no equivariant isomorphism
$$\rH^2(X,\bZ) \not \simeq \rH^2(Y,\bZ).$$

These may be produced using the theory of binary quadratic forms
\cite{Buell}. Consider even, negative definite, rank-two lattices represented 
by symmetric integer matrices $A$ and $B$. We say that they are in
the same {\em genus} if they are $p$-adically equivalent for all primes $p$;
this is equivalent \cite[Cor.~1.13.4]{nik-lattice} to stable equivalence
$$A \oplus U \simeq B \oplus U, \quad 
U = \left(\begin{matrix} 0 & 1 \\ 1 & 0 \end{matrix} \right).$$
There are criteria, expressed via class groups, for the existence of
non-isomorphic lattices in the same genus; see \cite[App.~1]{Buell} for 
tables. 

We seek examples of such lattices $A$ and $B$, subject to the condition that 
$A$ and $B$ do not represent $-2$.  This last assumption ensures that the divisors $f$ and $g$
are ample. For instance, consider even positive definite binary forms of discriminant $-47$;
the reduced forms are:
$$\left( \begin{matrix} 2 & 1 \\ 1 & 24 \end{matrix} \right), \
\left( \begin{matrix} 4 & 1 \\ 1 & 12 \end{matrix} \right), \
\left( \begin{matrix} 4 & -1 \\ -1 & 12 \end{matrix} \right), \
\left( \begin{matrix} 6 & 1 \\ 1 & 8 \end{matrix} \right), \
\left( \begin{matrix} 6 & -1 \\ -1 & 8 \end{matrix} \right).
$$
Only the first of these represents $2$ so we could take
$$A= - \left( \begin{matrix} 4 & 1 \\ 1 & 12 \end{matrix} \right), \ 
B=-\left( \begin{matrix} 6 & 1 \\ 1 & 8 \end{matrix} \right).$$

We construct the desired K3 surfaces using surjectivity of the Torelli map.
Choose a K3 surface $X$ with 
$$\Pic(X) = \bZ f \oplus A$$
with involution $\iota$ fixing $f$ and acting on $A$ and $T(X)$ via $-1$.  
There exists a second K3 surface $Y$
with 
$$\Pic(Y) = \bZ g \oplus B$$
and $T(X) \simeq T(Y)$.  
This admits an involution $\kappa$ acting on $B$ and $T(Y)$ via $-1$. 
There is no isomorphism $\Pic(X) \simeq \Pic(Y)$ compatible with the involutions.
However the stable equivalence of $A$ and $B$ induces
$$\widetilde{\rH}(X,\bZ) \simeq U \oplus \rH^2(X,\bZ) \simeq U \oplus \rH^2(Y,\bZ)
\simeq \widetilde{\rH}(X,\bZ),$$
compatible with $\iota$ and $\kappa$.  
The involutions act on the $U$ summands via multiplication by $-1$.
\end{exam}

We will explore this further in \cite{HT23}.

\section{Isotrivial families}
\label{sect:isotrivial}

\subsection{Construction}
\label{subsect:construct}
Let $X$ be a projective
K3 surface and 
$$
G=C_N \subseteq \Aut(X)
$$ 
a finite cyclic subgroup of the automorphism group of $X$. 
Let $\Delta_2=\Spec(\bC[[\tau]])$ be a formal disc on which 
$G$ acts via
$$\tau \mapsto \zeta \tau, \quad \zeta=\exp(2\pi i/N).$$ 
The $G$-equivariant projection 
$$X\times \Delta_2 \ra \Delta_2$$
induces an isotrivial family
$$\pi:\cX:=(X\times \Delta_2)/G \ra \Delta_1:=\Delta_2 /G.$$

Let $K=\bC((t))$ and $L=\bC((\tau))$ denote the fields associated with $\Delta_1$ and
$\Delta_2$. We regard $\cX_t$ as a K3 surface over $K$; a $K$-rational point of $\cX_t$
is equivalent to a section of $\pi$.

\begin{prop}
Suppose that $X$ and $Y$ are complex K3 surfaces with faithful actions of $G=C_N$; assume they
are $G$-equivariantly derived equivalent.  Then $\cX_t$ and $\cY_t$ are
derived equivalent over $K$.
\end{prop}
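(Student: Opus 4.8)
The plan is to realize $\cX_t$ and $\cY_t$ as Galois twists of $X$ and $Y$, and to promote the given equivariant Fourier--Mukai kernel to a Galois descent datum for a kernel over $K$. The extension $L/K=\bC((\tau))/\bC((t))$, with $t=\tau^N$, is cyclic Galois, and the action $\tau\mapsto\zeta\tau$ identifies $G$ with $\operatorname{Gal}(L/K)$. Base changing the construction of Section~\ref{subsect:construct} along $\Spec L\to\Spec K$ makes the generic fiber constant: there are identifications $(\cX_t)_L\simeq X_L$ and $(\cY_t)_L\simeq Y_L$ under which the canonical semilinear Galois action corresponds to the diagonal twisted action $\sigma\cdot(x,\ell)=(\sigma|_X\,x,\ \sigma(\ell))$ on $X\times_\bC L$, and similarly for $Y$. (A short computation with the quotient map $X\times\Delta_2\to\cX$ shows the same $\sigma$ appears on both factors, not its inverse.) Equivalently, $\cX_t$ is the $K$-form of $X_L$ obtained by twisting the standard Galois action by the cocycle $\sigma\mapsto\sigma|_X\in\Aut(X)$.

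First I would reduce to Galois descent of kernels. A derived equivalence over $K$ is a kernel $Q\in D^b(\cX_t\times_K\cY_t)$ whose transform is an equivalence. Base changing gives
$$
(\cX_t\times_K\cY_t)_L \simeq X_L\times_L Y_L \simeq (X\times Y)_L,
$$
and by finite Galois descent along $L/K$, giving $Q$ is the same as giving an object of $D^b((X\times Y)_L)$ together with a descent datum for the semilinear action of $\operatorname{Gal}(L/K)$. Under the identifications above, this semilinear action is the diagonal twisted action $\sigma\mapsto(\sigma|_X\times\sigma|_Y)\circ\sigma_{\mathrm{Gal}}$ on $(X\times Y)_L$.

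The heart of the argument is to manufacture this descent datum from the equivariant structure. By Section~\ref{subsect:EDE} the equivalence is induced by a kernel $\cP=(P,\rho)\in D^b(X\times Y,G_{\Delta})$, where $\rho$ is a strict $G$-linearization, i.e.\ a family of isomorphisms $\rho_\sigma\colon(\sigma|_X\times\sigma|_Y)^*P\xrightarrow{\sim}P$ satisfying the cocycle condition. Since $P_L$ is pulled back from $\bC$, the Galois factor acts canonically, $\sigma_{\mathrm{Gal}}^*P_L\simeq P_L$, so base changing $\rho$ and composing with these canonical isomorphisms produces precisely isomorphisms between $P_L$ and its conjugates under the diagonal twisted action. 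Because $G\simeq\operatorname{Gal}(L/K)$ is a group isomorphism and $\rho$ is a strict cocycle, these data satisfy the descent cocycle condition. The strictness is what makes this work: it lets an honest equivariant complex descend as an honest complex, sidestepping the higher-homotopy subtleties of descent in the derived category. As finite flat descent for bounded complexes of coherent sheaves is effective, $P_L$ descends to a kernel $Q\in D^b(\cX_t\times_K\cY_t)$.

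It remains to check that $\FM_Q$ is an equivalence over $K$, which I would verify after the faithfully flat base change $K\to L$: by compatibility of Fourier--Mukai transforms with base change, $\FM_Q$ becomes $\FM_{P_L}$, and this is an equivalence because $P$ induces an ordinary equivalence $D^b(X)\simeq D^b(Y)$ (forgetting $G$, as in Section~\ref{subsect:EDE}) and equivalences persist under the extension $\bC\to L$; equivalently, the equivariant inverse kernel descends as well and the convolution identities descend along the fully faithful descent functor. I expect the main obstacle to be the precise bookkeeping that matches the equivariant linearization with the semilinear descent datum---tracking the identification $G\simeq\operatorname{Gal}(L/K)$ and the canonical trivialization of the Galois factor on $P_L$---together with confirming effectivity of descent at the level of $D^b$, which is exactly why one insists on a strict linearization rather than a mere quasi-isomorphism class.
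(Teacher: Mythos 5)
Your argument is correct, but it takes a genuinely different route from the paper's. The paper realizes $Y\simeq M_v(X)$ for a $G$-invariant Mukai vector, descends the induced isomorphism $\cY_\tau\simeq M_v(\cX_\tau)$ to $\cY_t\simeq M_v(\cX_t)$ as coarse moduli space, and then only has to produce a universal sheaf over $K$; since the parametrized sheaves are simple, the obstruction to this lives in $\Br(K)$, which vanishes for $K=\bC((t))$. In particular the paper needs only that the kernel $P$ is $G$-\emph{invariant}, not that it carries a linearization --- this is exactly why the authors remark that their proof gives more than the stated proposition. You instead use the full linearization $\rho$ as a Galois descent datum for the kernel itself, after identifying $G$ with $\operatorname{Gal}(L/K)$ and $\cX_t$, $\cY_t$ with the corresponding twists of $X_L$, $Y_L$; this is a clean, essentially self-contained alternative that avoids moduli spaces of sheaves and never invokes the vanishing of $\Br(\bC((t)))$ (that vanishing is, in effect, absorbed into the hypothesis that a linearization exists). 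The price is the effectivity of descent at the level of $D^b$, which you correctly reduce to having chain-level cocycle data and to the classical equivalence between equivariant coherent sheaves on $X_L\times Y_L$ for the free twisted action and coherent sheaves on $\cX_t\times_K\cY_t$; for a finite \'etale Galois cover this is unproblematic, though you should say explicitly that Ploog's $D^b(X\times Y,G_\Delta)$ agrees with the derived category of the abelian category of equivariant sheaves (the paper cites Elagin for this compatibility). Your final verification that $\FM_Q$ is an equivalence after the faithfully flat base change $K\to L$ is standard and fine. In short: both proofs work; the paper's is weaker in its hypotheses but leans on the special arithmetic of $\bC((t))$, while yours uses the stronger equivariant hypothesis to run a descent argument that would transport to other cyclic isotrivial settings.
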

Actually, our proof will give more: It suffices to assume that there exists a 
$G$-invariant complex $P$ inducing the equivalence between $X$ and $Y$ (see
Section~\ref{sect:gen}).
\begin{proof}
Realize 
$$
i:Y \stackrel{\sim}{\longrightarrow} M_v(X)
$$ 
for some
Mukai vector $v$ for $X$, fixed under the $G$-action. 
This isomorphism may be chosen to be equivariant under the $G$-action.
Letting $\tau=\sqrt[N]{t}$, we basechange to an isomorphism
$$\cY_{\tau} \simeq M_v(\cX_{\tau}).$$ 
This descends to an isomorphism 
$$\cY_t \simeq M_v(\cX_t),$$
where the latter is the coarse moduli space.  To complete
the proof, we need that $M_v(\cX_t) \times \cX_t$ admits a universal sheaf.  
Since the underlying sheaves are simple, this universal sheaf is unique up to tensoring by line 
bundles on $M_v(\cX_t)$ -- a trivial line bundle given our assumption
that $P$ is $G$-invariant. Thus the obstruction to descending the data associated with
$P$ to a sheaf defined over $K$ lives in the Brauer group of $K$.
The triviality of $\Br\big(\bC((t))\big)$ shows this obstruction vanishes.
\end{proof}

\subsection{Rational points and fixed points}
\label{subsect:RFP}

\begin{prop}
\label{prop:section}
The morphism
$$
\pi:\cX\to \Delta_1
$$ 
admits a section if and only if the action of $G$ on
$X$ admits a fixed point.
\end{prop}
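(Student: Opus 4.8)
The plan is to translate the question about sections of $\pi$ into one about $K$-rational points of the generic fibre, and then to solve the latter by Galois descent together with a reduction argument. Since $\cX\to\Delta_1$ is proper and $\Delta_1=\Spec(\bC[[t]])$ is a trait, a section restricts to a $K$-point of $\cX_t$ and, conversely, the valuative criterion of properness extends any $K$-point uniquely to a section; this is the equivalence already recorded before the statement. So it suffices to decide when $\cX_t(K)\neq\emptyset$. To that end I would make the descent structure explicit: writing $\tau=\sqrt[N]{t}$ identifies $L=\bC((\tau))$ with the cyclic degree-$N$ extension of $K$ and $\operatorname{Gal}(L/K)$ with $G$, a fixed generator $g$ acting through $\sigma:\tau\mapsto\zeta\tau$. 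Because $\cX$ is the quotient of the constant family $X\times\Delta_2$, and because $G$ acts freely on the generic point of $\Delta_2$, base change to $L$ trivializes the family, giving an isomorphism $\cX_t\times_K L\simeq X_L=X\times_\bC L$. The attached descent datum is the semilinear $G$-action on $X(L)$ whose generator sends $P\mapsto g\cdot\sigma(P)$, combining the automorphism $g\in\Aut(X)$ with the Galois action $\sigma$ on coordinates, and Galois descent then identifies
$$\cX_t(K)=\{\,P\in X(L): g\cdot\sigma(P)=P\,\}.$$

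The forward implication is immediate. If $x_0\in X$ is fixed by $G$, then $\{x_0\}\times\Delta_2$ is $G$-invariant and descends to a section of $\pi$; equivalently, $x_0\in X(\bC)\subset X(L)$ is fixed by both $g$ and $\sigma$ and so satisfies the descent condition displayed above.

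For the converse, which is the main point, I would start from a point $P\in X(L)$ with $g\cdot\sigma(P)=P$ and manufacture a genuine fixed point on $X$ by reduction modulo $\tau$. Since $X$ is projective, the valuative criterion of properness extends $P$ to an $\cO_L$-point $\tilde P:\Spec(\bC[[\tau]])\to X$; let $\bar P\in X(\bC)$ be its reduction. The automorphism $\sigma$ preserves $\bC[[\tau]]$ and induces the identity on the residue field, while $g$ is defined over $\bC$; by uniqueness of the extension, applying $g\circ\sigma$ to $\tilde P$ again yields the extension of $g\cdot\sigma(P)=P$, namely $\tilde P$ itself. Reducing this equality modulo $\tau$, and using that $\sigma$ acts trivially on the special fibre while $g$ reduces to $g$, gives $g(\bar P)=\bar P$. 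Hence $\bar P$ is fixed by $g$, and therefore by all of $G$.

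The only genuinely delicate steps are the two identifications that make this work. First, one must verify that base change along $\Delta_2\to\Delta_1$ really does trivialize $\cX_t$ with the stated semilinear descent datum; this is a standard computation over the generic point, where the $G$-action on $\Delta_2$ is free, even though the analogous comparison map degenerates to the quotient map $X\to X/G$ on the central fibre, so one cannot argue naively over all of $\Delta_1$. Second, the reduction step hinges on $\sigma$ fixing the residue field $\bC$, which is exactly what lets the fixed-point relation descend from the generic to the special fibre. Everything else is formal.
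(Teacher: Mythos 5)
Your proof is correct, and one half of it takes a genuinely different route from the paper's. For the direction ``section $\Rightarrow$ fixed point'' you and the paper do essentially the same thing: pull the section back along $\Delta_2\to\Delta_1$, use properness of $X$ to extend the resulting $L$-point to a $\bC[[\tau]]$-point, and let uniqueness of that extension force its reduction modulo $\tau$ to be fixed by a generator of $G$ (and hence by all of $G$); your version merely makes explicit the Galois-descent bookkeeping $\cX_t(K)=\{P\in X(L):g\cdot\sigma(P)=P\}$ where the paper is terse. For ``fixed point $\Rightarrow$ section,'' however, the paper argues geometrically: it blows up the fixed component $Z\subset X$ inside $X\times\Delta_2$, treating the cases of an isolated fixed point and a fixed curve separately in local coordinates, and shows the exceptional divisor descends to a multiplicity-one component of the central fiber of a modification $\widetilde{\cX}\to\Delta_1$, which then carries a section. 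Your observation---that for a fixed point $x_0$ the invariant arc $\{x_0\}\times\Delta_2$ maps $G$-invariantly to $\cX$, so the composite $\Delta_2\to\cX$ factors through $\Delta_1=\Delta_2/G$ and the factored map is a section because $\Delta_2\to\Delta_1$ is an epimorphism---is shorter and avoids both the case analysis and the local computation. What the paper's construction buys is explicit information about models of $\cX$ (multiplicity-one components of the central fiber), in the spirit of the index and Burnside discussions elsewhere in the paper, but that is not needed for the statement itself. Both arguments are sound.
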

\begin{proof}
If $\pi$ admits
a section $\sigma_1:\Delta_1 \ra \cX$ then the induced section
$\sigma_2:\Delta_2 \ra \cX\times_{\Delta_1}\Delta_2$ is $G$-invariant, whence $\sigma_2(0)$
is fixed.  

Suppose $X$ has a fixed point. Then the resulting constant
section of $X\times \Delta_2 \rightarrow \Delta_2$ is 
invariant under the action of $G$ and thus descends to
a section of $(X\times \Delta_2)/G \rightarrow \Delta_2/G$.
\end{proof}

\section{Fixed point analysis}
\label{sect:fixed}

Let $X$ be a K3 surface over an algebraically closed field of characteristic zero and $\sigma\in\Aut(X)$ an 
automorphism of order $N$. In the following sections, we analyze the structure 
of the fixed point locus 
$$
X^{\sigma}:=\{ x\in X \,|\, \sigma(x)=x\},
$$
with the goal of identifying $\sigma$ such that $X^{\sigma}=\emptyset$. 

\subsection{Cyclic automorphisms}
\label{sect:cyclic}

We review basic properties of finite automorphisms due to Nikulin \cite{NikFinite}.
Suppose that $G=\left< \sigma \right>= C_N$ acts on a K3 surface $X$.  
We have an exact sequence
\begin{equation} \label{eqn:seq}
0 \ra  C_n  \ra G \ra  \mu_m \ra 0, \quad nm=N,
\end{equation}
where $C_n$ 
is the kernel of the representation of $G$ on the symplectic form.
Elements in $C_n$ are called {\em symplectic}; when $C_n=1$, the action is called {\em purely nonsymplectic}. We write $N=n\cdot m$, to emphasize the symplectic versus nonsymplectic actions.   

\begin{prop}\label{prop:equal}
Let $X_1$ and $X_2$ derived equivalent K3 surfaces.
Assume that both carry a faithful action of $G=C_N$ and that the derived equivalence is compatible with $G$. Then the factorizations
$$
N=n_1m_1 = n_2m_2,
$$
encoding the symplectic elements, are equal, i.e., 
$$
n_1=n_2\quad \text{ and }\quad m_1=m_2.
$$
\end{prop}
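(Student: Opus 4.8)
The plan is to detect the integer $m_i$ (equivalently $n_i=N/m_i$) from a single derived-invariant datum: the one-dimensional space of regular $2$-forms $\rH^0(X_i,\Omega^2_{X_i})$ together with its $G$-action. By definition of \eqref{eqn:seq}, the homomorphism $G\to\mu_{m_i}$ recording the scalar by which $g^*$ acts on the symplectic form is surjective with kernel $C_{n_i}$; hence $\sigma$ acts on $\rH^0(X_i,\Omega^2_{X_i})$ through a root of unity $\lambda_i$ of exact order $m_i$. It therefore suffices to show that a $G$-equivariant derived equivalence identifies these two characters: then $\lambda_1=\lambda_2$, and consequently $m_1=m_2$ and $n_1=n_2$.

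First I would unwind the hypothesis that the equivalence is compatible with $G$: it is induced by a kernel invariant under the diagonal $G_{\Delta}\subset G\times G$, so that for each $g\in G$ the autoequivalence $g_*$ of $D^b(X_1)$ is intertwined by $\Phi$ with $g_*$ of $D^b(X_2)$. Applying Hochschild homology $\mathrm{HH}_*(-)$, which is functorial for Fourier--Mukai equivalences, turns this intertwining into a $G$-equivariant graded isomorphism $\mathrm{HH}_*(X_1)\simeq\mathrm{HH}_*(X_2)$. For a K3 surface the two extreme graded pieces are one-dimensional, and under the Hochschild--Kostant--Rosenberg decomposition one of them is canonically $\rH^0(X_i,\Omega^2_{X_i})$; thus $\Phi$ produces a $G$-equivariant isomorphism of the two symplectic lines.

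The step requiring care---and the main obstacle---is to verify that the $G$-action induced on this extreme Hochschild piece is exactly the symplectic character, i.e.\ that the abstract action of the autoequivalence $g_*$ matches the geometric pullback on $\rH^0(\Omega^2)$. In general the Hochschild--Kostant--Rosenberg identification intertwines a Fourier--Mukai action only after the Mukai--C\u{a}ld\u{a}raru twist by $\sqrt{\Td}$; but for the autoequivalence attached to an automorphism $g$---with kernel the structure sheaf of its graph---the cohomological action is simply $g_*$, and the two $\sqrt{\Td}$ factors cancel because $g^*\Td(X)=\Td(X)$. Hence the $G$-action on the extreme piece is, up to an inversion that does not affect orders, the character $\sigma\mapsto\lambda_i$. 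Isomorphic one-dimensional representations share their character, so $\lambda_1=\lambda_2$; taking orders gives $m_1=m_2$ and then $n_1=N/m_1=N/m_2=n_2$.

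Over $\bC$, and in the projective case, one can bypass Hochschild homology: by the implication $(1)\Rightarrow(2)$ of Proposition~\ref{prop:Mukaicrit} the equivalence already yields a $G$-equivariant Hodge isometry of Mukai lattices, whose $(2,0)$-part is precisely the symplectic line, and the same conclusion follows. I would nonetheless keep the Hochschild argument as primary, since it is intrinsic and valid over any algebraically closed field of characteristic zero---the generality in which Proposition~\ref{prop:equal} is stated.
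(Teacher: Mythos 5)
Your proof is correct, and it reaches the same underlying invariant as the paper --- the character by which $\sigma$ acts on the symplectic line --- but by a genuinely different vehicle. The paper's proof is a one-line appeal to the (implicit) implication $(1)\Rightarrow(2)$ of Proposition~\ref{prop:Mukaicrit}: a $G$-equivariant derived equivalence induces a $G$-equivariant Hodge isometry of Mukai lattices, and the symplectic $2$-form spans a distinguished line in the complexification, so the eigenvalue of $\sigma$ on it --- a primitive $m_i$-th root of unity --- is preserved, forcing $m_1=m_2$ and hence $n_1=n_2$. You instead read off the same character from the extreme graded piece $\rH^0(X_i,\Omega^2_{X_i})$ of Hochschild homology, which is functorial for Fourier--Mukai equivalences, and you correctly identify and dispose of the one genuine subtlety (the $\sqrt{\Td}$ correction in the Hochschild--Kostant--Rosenberg comparison, which is harmless on the extreme one-dimensional piece and cancels for kernels given by graphs of automorphisms). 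What your route buys is independence from singular cohomology and Hodge theory: it works verbatim over any algebraically closed field of characteristic zero, which matches the generality in which Section~\ref{sect:fixed} is set, whereas the Mukai-lattice argument is native to $\bC$. What the paper's route buys is brevity, since the equivariant Mukai-lattice isometry is already on hand from Proposition~\ref{prop:Mukaicrit}. Your closing remark correctly identifies the paper's argument as the complex-projective shortcut.
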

\begin{proof}
We can read off the symplectic automorphisms from the action on the Mukai lattice, as the symplectic form is
distinguished in its complexification.  
\end{proof}

\subsection{Fixed point formulas}
\label{sect:fix}

Let $G=\langle \sigma\rangle$ be a cyclic group acting on a K3 surface $X$. Let 
$$
\sigma^*:\tilde{\rH}(X,\bZ) \to \tilde{\rH}(X,\bZ)
$$
be the induced action on the Mukai lattice, and 
$$
\chi(\sigma) :=\mathrm{Tr}(\sigma^*)
$$
the corresponding trace.

The {\em topological} fixed point formula
takes the form:
\begin{equation}
\label{eqn:top}
\chi( X^\sigma) = \chi(\sigma),
\end{equation}

Since $\chi(\sigma)$ may
be read off from the action on the Mukai lattice, $\chi(X^\sigma)$ is an invariant of $G$-equivariant
derived equivalence.

\begin{lemm}
\label{lemm:symf}
Let $N=n\cdot m$ with $n\ge 2$.
Then $X^\sigma$ is empty or a finite set of isolated points, and
$$
\chi(X^{\sigma}) = \# X^\sigma.
$$
\end{lemm}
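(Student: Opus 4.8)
The plan is to reduce the entire statement to Nikulin's analysis of \emph{symplectic} automorphisms by passing to an appropriate power of $\sigma$. Recall from the exact sequence \eqref{eqn:seq} that $C_n \subseteq G$ is the kernel of the action on the symplectic form, and that the quotient map $G \to \mu_m$ sends the generator $\sigma$ to a primitive $m$-th root of unity. Consequently $\sigma^m$ maps to $1$ in $\mu_m$, so $\sigma^m \in C_n$ is symplectic; moreover $\sigma^m$ has order $n$ in $G$, which is $\geq 2$ by hypothesis. Thus $\sigma^m$ is a \emph{nontrivial} symplectic automorphism of $X$.

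The key input is Nikulin's theorem \cite{NikFinite} that a nontrivial symplectic automorphism of finite order on a K3 surface has only finitely many, isolated, fixed points. Heuristically, at any fixed point the differential preserves the holomorphic $2$-form and hence lies in $\mathrm{SL}(2)$ acting on the tangent plane; a non-isolated fixed point would force the eigenvalue $1$ to occur, and a finite-order element of $\mathrm{SL}(2)$ with eigenvalue $1$ is the identity, so $\sigma^m$ would fix a surface neighborhood and therefore be trivial — a contradiction. Next I would use the elementary containment
$$
X^\sigma \subseteq X^{\sigma^m},
$$
valid because $\sigma(x)=x$ implies $\sigma^m(x)=x$. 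Since $X^{\sigma^m}$ is finite, $X^\sigma$ is a (possibly empty) finite set of points, each necessarily isolated. This establishes the first assertion.

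For the identity $\chi(X^\sigma)=\# X^\sigma$, once $X^\sigma$ is known to be a finite set of reduced points its topological Euler characteristic is simply its cardinality, each point contributing $1$. I expect the only genuine content to lie in invoking Nikulin's finiteness result and in verifying that the hypothesis $n\ge 2$ guarantees that $\sigma^m$ is a nontrivial symplectic automorphism; everything else is formal. Note that no appeal to the topological fixed point formula \eqref{eqn:top} is required for the lemma itself, although the two are of course mutually consistent.
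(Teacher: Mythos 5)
Your proposal is correct and follows essentially the same route as the paper, whose proof is a one-line appeal to Nikulin's result that (nontrivial) symplectic automorphisms have only isolated fixed points. You merely make explicit the reduction that the paper leaves implicit, namely that $\sigma^m$ is a nontrivial symplectic automorphism when $n\ge 2$ and that $X^\sigma \subseteq X^{\sigma^m}$.
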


\begin{proof}
By \cite{NikFinite}, symplectic automorphisms do not contain curves in their fixed locus (a detailed description of possible $X^\sigma$ is in Section~\ref{sect:RCP}). 
\end{proof}

The {\em complex} Lefschetz fixed point formula involves sums 
\begin{equation}
\label{eqn:hol2}
\sum_{\mathfrak p} a(\mathfrak p) \, +\, \sum_{C} b(C),
\end{equation}
of contributions from fixed points and fixed curves; here $\zeta=\zeta_N$ (see, \cite[p. 567]{At-S-3}). 
The corresponding contributions are given by
$$
a(\mathfrak p)= \frac{1}{(1-\zeta^i)(1-\zeta^j)},
$$
for fixed points $\mathfrak p$ with weights $\beta_{\mathfrak p}=(i,j)$ in 
the tangent bundle 
at $\mathfrak p$, and 
$$
b(C) =\frac{1-g(C)}{1-\zeta^{-r(C)}} - 
\frac{\zeta^{-r(C)}}{(1-\zeta^{-r(C)})^2} C^2,
$$
where $g(C)$ is the genus of $C$, and $r(C)$ is the weight in the normal bundle to $C$.
For $K3$ surfaces we obtain
\begin{equation}
\label{eqn:hol}
1+\zeta^{-m} = \sum_{i,j} \frac{a_{ij}}{(1-\zeta^i)(1-\zeta^j)} + 
\sum_{C\subseteq X^\sigma} (1-g(C)) \frac{1+\zeta^n}{(1-\zeta^n)^2},
\end{equation}
where 
\begin{itemize} 
\item 
$a_{ij}$ is the number of 
$\sigma$-fixed points $\mathfrak p$ with weights $\beta_{\mathfrak p}=(i,j)$ in 
the tangent bundle 
at $\mathfrak p$, 
$$
i+j\equiv n \pmod N, \quad i,j\neq 0,
$$
\item
$C\subseteq X^\sigma$ are  (smooth irreducible) curves,
\end{itemize}
(see \cite{NikFinite} or \cite[Lemma 1.1]{ACV}).

Formula \eqref{eqn:hol} immediately implies:

\begin{lemm}
\label{lem:nonz}
Let $N=n\cdot m$ with $m\neq 2$. Then 
$$
X^\sigma \neq \emptyset.
$$
\end{lemm}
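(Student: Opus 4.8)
The plan is to exploit the holomorphic Lefschetz fixed point formula \eqref{eqn:hol} and argue by contradiction: assume $X^\sigma=\emptyset$ and derive an arithmetic impossibility from the vanishing of \emph{both} sides. If $X^\sigma=\emptyset$ then every sum on the right-hand side is empty, so the formula forces the left-hand side to vanish,
\begin{equation*}
1+\zeta^{-n}=0,
\end{equation*}
where $\zeta=\zeta_N=\exp(2\pi i/N)$. The whole argument reduces to analyzing when this scalar equation can hold.

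First I would rewrite $1+\zeta^{-n}=0$ as $\zeta^{-n}=-1$, i.e. $\zeta^n=-1$. Since $\zeta$ is a \emph{primitive} $N$-th root of unity, $\zeta^n=-1$ means $\zeta^n$ has order exactly $2$, which forces $2n\equiv 0$ and $n\not\equiv 0 \pmod N$; combined with $N=nm$ this says precisely that $m=2$. Thus the only way the left-hand side of \eqref{eqn:hol} can vanish is $m=2$. Under the hypothesis $m\neq 2$ the left-hand side is nonzero, so the right-hand side cannot be empty, and therefore $X^\sigma\neq\emptyset$.

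The step requiring the most care is justifying the case split that makes the argument clean. When $m\neq 2$ one must separate $m=1$ (purely nonsymplectic, where $\zeta^n=1$ so $1+\zeta^{-n}=2\neq 0$) from $m\ge 3$ (where $\zeta^n$ is a primitive $m$-th root of unity, hence $\neq -1$, so again $1+\zeta^{-n}\neq 0$). In either subcase the left side is a nonzero element of $\bC$, and the contradiction with an empty right-hand side is immediate. I would present this as the single computation $1+\zeta^{-n}\neq 0$ whenever $m\neq 2$, noting that $\zeta^{-n}=-1$ is equivalent to $m=2$.

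The main obstacle I anticipate is purely bookkeeping rather than conceptual: one must confirm that formula \eqref{eqn:hol} is genuinely an equality of complex numbers with the left-hand side $1+\zeta^{-n}$ as written, so that emptiness of $X^\sigma$ really does kill the entire right side. Since \eqref{eqn:hol} is quoted from \cite{NikFinite} (cf.\ \cite[Lemma 1.1]{ACV}) and stated for all $N=n\cdot m$, no further input is needed; the lemma follows by inspection. This is why the excerpt remarks that the formula \emph{immediately} implies the statement.
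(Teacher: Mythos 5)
Your proposal is correct and follows exactly the paper's argument: the paper's own proof is the one-line observation that for $m\neq 2$ the left-hand side $1+\zeta^{-n}$ of \eqref{eqn:hol} is nonzero, so the right-hand side cannot be an empty sum. You have simply made explicit the computation that $\zeta^{-n}=-1$ is equivalent to $\zeta^n$ having order $2$, i.e.\ to $m=2$, which is a useful but routine elaboration of the same idea.
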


\begin{proof}
Consider equation (\ref{eqn:hol}). If $m\neq 2$ then the left-hand side is nonzero. It follows
that the sums on the right-hand side are nonempty. Since these are indexed by fixed points or curves, we 
conclude that $X^{\sigma} \neq \emptyset$. 
\end{proof}

Lemma~\ref{lem:nonz} shows that we always have fixed points in the {\bf symplectic case}.
In the {\bf purely nonsymplectic case}, where $N=m$, or equivalently, $n=1$,
Lemma~\ref{lem:nonz} guarantees fixed points, except where $m=N=2$.  
In this case, the only fixed-point free action is the Enriques involution. Such an involution is characterized by the sublattice of its fixed classes
(see, e.g.,  \cite{Nik-ICM}, \cite[Th. 1.1]{AS}, \cite[Th. 3.1]{AST}):
$$
\Pic(X)^\sigma \simeq U(2) \oplus E_8(2).
$$

We turn to the {\bf mixed case} where $m,n>1$. 
Lemma~\ref{lemm:symf} guarantees that the existence of $\sigma$-fixed points is governed by the trace of $\sigma$ on $\widetilde{\rH}$, i.e., is a derived invariant. This completes the proof of Theorem~\ref{theo:main}.

\subsection{Role of classification in the proof}
\label{sect:RCP}
Despite initial expectations, the proof of Theorem~\ref{theo:main} does not hinge on classification. At the same time, the comprehensive enumeration in \cite{BH} does raise interesting questions.

\begin{quote}
Can we explicitly describe all types of cyclic automorphisms with $X^\sigma=\emptyset$?
Deeper arithmetic problems -- extensions to more complicated isotrivial families or the
$p$-adics -- would require understanding of all finite groups of automorphisms.  
\end{quote}

We present indicative examples of actions with $X^{\sigma}=\emptyset$.

Nikulin \cite{NikFinite} classified symplectic automorphisms of a K3 surface $X$ of order $n$ (in the notation above, $N=n$ and $m=1$): We necessarily have
$n \le 8$ and $X^\sigma\neq \emptyset$. 
Moreover, $X^\sigma$ is a finite set of isolated points, 
whose structure is given by
\begin{itemize}
\item{$n=2:$ $8$ fixed points}
\item{$n=3:$ $6$ fixed points}
\item{$n=4:$ $4$ fixed points (and $4$ points with order two stabilizer)}
\item{$n=5:$ $4$ fixed points}
\item{$n=6:$ $2$ fixed points (and $4$ points with order three stabilizer, and $6$
points with order two stabilizer)}
\item{$n=7:$ $3$ fixed points}
\item{$n=8:$ $2$ fixed points (and $2$ points with order four stabilizer, $4$ points with order two
stabilizer).}
\end{itemize}
Mukai \cite{MukaiM} gave a classification of all finite groups acting symplectically.

Detailed results are also available for purely 
nonsymplectic automorphisms of order $m$.
The cases of {\em prime} order 
have been considered in \cite{AST}, and various other special cases in, e.g.,  \cite{ACV}, \cite{AS}, \cite{dillies}, \cite{ST}, \cite{Taki2}. 
A complete classification, including an analysis of possible fixed point configurations,
is presented in \cite[Appendix B]{BH}:
Let $\sigma$ be a purely nonsymplectic automorphism of a K3 surface $X$ of order 
$m$. Then 
$$
m\in \{ 2,\ldots, 28\}\setminus \{ 23\},
$$
or 
$$
m\in \{ 30, 32, 33, 34, 36, 40, 44, 48, 50, 54, 66\}.
$$

We return to our general situation where $C_N, N=nm,$ acts on a K3 surface, via $m$th roots of unity on the symplectic form. Lemma~\ref{lem:nonz} allows us to restrict to $m=2$.

By \cite[Lem. 4.8]{Keum}, $m=2$ implies that $n\neq 8$. For $n=7$, the number of fixed points of the subgroup $C_7=\langle \sigma^2\rangle \subset G$ is three, thus we are guaranteed $\sigma$-fixed points. For $n\le 6$ there exist fixed-point free actions. We record: 

\begin{itemize}
\item $N=2\cdot 2$: Then $X^\sigma$ is either empty, or it consists of 2, 4, 6, or 8 points \cite[Prop. 2]{AS}; when $X^\sigma=\emptyset$, the $\sigma^*$-action on $\rH^2(X,\bQ)$ has
eigenvalues $1$ and $-1$ with multiplicities $6$ and $8$, this characterizes such actions
\cite[Prop. 2]{AS}. 
K3 surfaces with $N=2\cdot 2$ have
$\operatorname{rk} \Pic(X) \ge 14$
\cite[Rema.~1.3]{AS}.
Examples of such actions can be found in 
\cite[Exam.~1.2]{AS}. 
\item $N=3\cdot 2$: Then $X^\sigma$ is either empty, or it consists of 2, 4, or 6 points \cite[Prop. 3.4]{ST}. 
    \item $N=4\cdot 2$: Here the enumeration of cases is more complicated. The classification in \cite{BH} of
symplectic actions on K3 surfaces lists only {\em maximal} actions: If $G$ acts symplectically
on a K3 surface $X$, consider its {\em saturation}, i.e., the largest subgroup $G'\subset \Aut(X)$ such that
$\rH^2(X,\bZ)^{G'} = \rH^2(X,\bZ)^G$ -- a finite group acting symplectically on $X$. Thus the enumeration
requires checking many subgroups for the presence of an element of the prescribed order.  

Consider, for instance, the group with GAP id {\tt (8,1)} 
from the second column of Table 3 in  
\cite{BH}, which lists three types. The  
possibilities for $\chi(\sigma^r)$, for $r=1,2,4$,  are
$$
\begin{array}{c|c|c} 
\sigma & \sigma^2 & \sigma^4 \\
\hline
0 & 4 & 8 \\
2 & 4 & 8 \\
4 & 4 & 8 \\
\end{array}
$$
\item $N=5\cdot 2$: Note that $C_n,n=5,6,7$ does not appear as the saturation of a mixed action with $m=2$ 
\cite[Table~3]{BH}. However, there are larger groups admitting cyclic subgroups of order ten acting on the 
symplectic form via $\pm 1$. 

For example, suppose that $G$ is an extension
$$ 
1 \rightarrow \mathfrak{A}_6 \rightarrow G \rightarrow \mu_2 \rightarrow 1,
$$
where the alternating group is the maximal symplectic subgroup. Assume that $G$ has GAP id {\tt (720,764)},
which admits elements of order ten. (Of course, $\mathfrak{A}_6$ has no such elements!) There are
six different occurences of this group in the classification. The one with K3 id $(79.2.1.3)$ has
distinguished generator (in the nomenclature of the data sets supporting \cite{BH}) $\sigma$ of
order ten with $\chi(\sigma)=0$. 
\end{itemize}

\subsection{Relations to Burnside invariants}
Brandhorst and Hofmann \cite{BH} explore cases where the data from the fixed-point formulas are insufficient
to characterize the automorphism. These are called {\em ambiguous cases}, at least in the purely
nonsymplectic context \cite[\S 7]{BH}. 

It would be interesting to consider these from the perspective
of the Burnside group:
Given the action of a finite cyclic group $G$ on a K3 surface,
there is a combinatorial object consisting of subgroups $G_i \subset G$
indexed by strata $Z_i \subset X$ with nontrivial 
stabilizer $G_i$, labeled by the induced action on $Z_i$, and the representation
type of the action of $G_i$ on the normal bundle; data of such type are building blocks 
of equivariant Burnside groups introduced in \cite{BnG}. The tables in \cite[Appendix B]{BH} list possible configurations of fixed points and curves, 
for purely nonsymplectic actions.  
How much of the Burnside data can be extracted from the representation of $G$ on the Mukai lattice? 

The paper \cite{KT-dp} explores such a connection for actions of finite groups on del Pezzo surfaces.

\

Another interesting problem is to identify which actions classified in \cite{BH} are 
derived equivalent and even to classify finite groups of autoequivalences of
K3 surfaces \cite{HuyDer}. For example, Ouchi \cite[\S 8]{Ouchi} has
found symplectic autoequivalences of orders $9$ and $11$ via
cubic fourfolds; these cannot be realized as symplectic actions on 
K3 surfaces.

\bibliographystyle{alpha}
\bibliography{isoshort}

\end{document}